 \newtheorem{thm}{Theorem}[section]
 \newtheorem{lem}[thm]{Lemma}
 \newtheorem{prop}[thm]{Proposition}
 \theoremstyle{definition}
 \newtheorem{defn}[thm]{Definition}
 \theoremstyle{remark}
 \newtheorem*{ex}{Example}
 \numberwithin{equation}{section}
\def\omathop#1#2#3{\let\temp=#1\def\letter{#2}
  \ifcat#3_ \let\next\@@olim\else\let\next\@olim\fi\next#3}
\def\@olim{\letter\text{-}\!\temp}
\def\@@olim_#1{\mathchoice{
   \setbox0=\hbox{$\displaystyle\letter\text{-}\!\temp\!\text{-}\letter$}
   \setbox2=\hbox{$\displaystyle\temp$}
   \setbox4=\hbox{$\scriptstyle#1$}
   \dimen@=\wd4 \advance\dimen@ by -\wd2 \divide\dimen@ by2
   \def\next{\letter\text{-}\!\temp_{\hbox to 0pt{\hss$\scriptstyle#1$\hss}}
     \hskip\dimen@}
   \ifdim\wd2>\wd4 \def\next{\@olim_{#1}}\fi
   \ifdim\wd4>\wd0 \def\next{\mathop{\llap{$\letter$-}\!\temp}\limits_{#1}}\fi
   \next}
   {\@olim_{#1}}{\@olim_{#1}}{\@olim_{#1}}}
\def\bolim{\omathop{\lim}{bo}}
\newcommand{\reduce}{\mskip-2mu}
\newcommand{\ls}{\reduce\left\bracevert\reduce\vphantom{X}}
\newcommand{\rs}{\reduce\vphantom{X}\reduce\right\bracevert\reduce}
\begin{document}

%
%
%
%
%
%
%
%
%

 \title[Narrow operators on lattice-normed]
 {Narrow operators on lattice-normed\\ spaces and vector measures}

 \author[D. T. Dzadzaeva]{Dina Dzadzaeva}
\address{Department of Mathematics\\
and Informatics\\
North-Osetian State University\\
Vladikavkaz, Russia}


\thanks{This work was completed with the support of Russian Foundation of Basic Research, grant number  15-51-53119.}

\author[M. A. Pliev]{Marat Pliev}

\address{South Mathematical Institute\\
of the Russian Academy of Sciences\\
Vladikavkaz, Russia}


\subjclass{Primary 46B99; Secondary 46G12}

\keywords{Narrow operator, lattice-normed space, Banach space with
 mixed norm, vector lattice, vector measure}

\date{June 30, 2014}

\dedicatory{Dedicated to the memory of Adriaan Cornelis Zaanen, the one of the great  founder of the theory of vector lattices}

\begin{abstract}
We consider linear narrow operators on lattice-normed spaces. We
prove that, under mild assumptions, every finite rank linear
operator is strictly narrow (before it was known  that such
operators are narrow). Then we show that every dominated, order
continuous linear operator from a lattice-normed space over atomless
vector lattice to an  atomic lattice-normed  space  is order narrow.
\end{abstract}

\maketitle

\section{Introduction}

Today the theory of narrow operators is a growing and active  field
of Functional Analysis (see the recent monograph \cite{PR}). Plichko
and Popov were the first \cite{PP90} who systematically studied this
class of operators. Later many authors  have  studied linear and
nonlinear narrow operators  in functional spaces and vector lattices
\cite{Bi-2, F, MMP, MPRS, PP}.  In the article \cite{P-1} the second
named author  have considered a general lattice-normed space
approach to narrow operators. Recently it became clear that a
technique of vector measures is relevant to narrow operators
\cite{MPPS}. The aim of this article is  to use this new technique
for investigation of linear narrow operators on lattice-normed
spaces.

\section{Preliminaries}

The  goal of this section is to introduce some basic definitions and
facts. General information on vector lattices, Banach spaces and
lattice-normed spaces  can be found in the books
\cite{AA,AB,Kus,LZ}.

Consider a vector space $V$ and a real  archimedean vector lattice
$E$. A map $\ls \cdot\rs:V\rightarrow E$ is a \textit{vector norm}
if it satisfies the following axioms:
\begin{enumerate}
  \item[1)] $\ls v \rs\geq 0;$\,\, $\ls v\rs=0\Leftrightarrow v=0$;\,\,$(\forall v\in V)$.
  \item[2)] $\ls v_1+v_2 \rs\leq \ls v_1\rs+\ls v_2 \rs;\,\, ( v_1,v_2\in V)$.
  \item[3)] $\ls\lambda  v\rs=|\lambda|\ls v\rs;\,\, (\lambda\in\Bbb{R},\,v\in V)$.
\end{enumerate}
A vector norm is called \textit{decomposable} if
\begin{enumerate}
  \item[4)] for all $e_{1},e_{2}\in E_{+}$ and $x\in V$ with $\ls x\rs=e_{1}+e_{2}$  there exist $x_{1},x_{2}\in V$ such that $x=x_{1}+x_{2}$ and $\ls x_{k}\rs=e_{k}$, $(k:=1,2)$.
\end{enumerate}

A triple $(V,\ls\cdot\rs,E)$ (in brief $(V,E),(V,\ls\cdot\rs)$ or
$V$ with default parameters omitted) is a \textit{lattice-normed
space} if $\ls\cdot\rs$ is a $E$-valued vector norm in the vector
space $V$. If the norm $\ls\cdot\rs$ is decomposable then the space
$V$ itself is called decomposable. A subspace $V_{0}$ of $V$ is
called a $\text{(bo)}$-ideal of $V$ if for $v\in V$ and $u\in
V_{0}$, from $\ls v\rs\leq\ls u\rs$ it follows that $v\in V_{0}$. We
say that a net $(v_{\alpha})_{\alpha\in\Delta}$ {\it
$(bo)$-converges} to an element $v\in V$ and write $v=\bolim
v_{\alpha}$ if there exists a decreasing net
$(e_{\gamma})_{\gamma\in\Gamma}$ in $E^{+}$ such that
$\inf_{\gamma\in\Gamma}(e_{\gamma})=0$ and for every
$\gamma\in\Gamma$ there is an index $\alpha(\gamma)\in\Delta$ such
that $\ls v-v_{\alpha(\gamma)}\rs\leq e_{\gamma}$ for all
$\alpha\geq\alpha(\gamma)$. A net $(v_{\alpha})_{\alpha\in\Delta}$
is called \textit{$(bo)$-fundamental} if the net
$(v_{\alpha}-v_{\beta})_{(\alpha,\beta)\in\Delta\times\Delta}$
$(bo)$-converges to zero. A lattice-normed space is called {\it
$(bo)$-complete} if every $(bo)$-fundamental net $(bo)$-converges to
an element of this space.  Every decomposable $(bo)$-complete
lattice-normed space is called a {\it Banach-Kantorovich space} (a
BKS for short).

Let $V$ be a lattice-normed space and $y,x\in V$. If $\ls
x\rs\wedge\ls y\rs=0$ then we call the elements $x,y$ {\it disjoint}
and write $x\bot y$. As in the case of a vector lattice, a set of
the form $M^{\bot}=\{v\in V:(\forall u\in M)u\bot v\}$, with
$\emptyset\neq M\subset V$, is called a \textit{band}. The equality
$x=\coprod_{i=1}^{n}x_{i}$ means that $x=\sum\limits_{i=1}^{n}x_{i}$
and $x_{i}\bot x_{j}$ if $i\neq j$. An element $z\in V$ is called a
{\it component} or a \textit{fragment} of $x\in V$ if  $z\bot(x-z)$.
Two fragments $x_{1},x_{2}$ of $x$  are called \textit{mutually
complemented} or $MC$, in short, if $x=x_1+x_{2}$. The notations
$z\sqsubseteq x$ means that $z$ is a fragment of $x$. The set of all
fragments of the element $v\in V$ is  denoted by $\mathfrak{F}_{v}$.
Following (\cite{AB},\,p.111) an element $e>0$ of a vector lattice
$E$ is called an {\it atom}, whenever $0\leq f_{1}\leq e$, $0\leq
f_{2}\leq e$ and $f_{1}\bot f_{2}$ imply that either $f_{1}=0$ or
$f_{2}=0$. A vector lattice $E$ is  atomless if there is no atom
$e\in E$.

Remark that vector lattices and normed spaces are simple examples of
lattice normed spaces. Indeed, if $V=E$ then the modules of an
element can be taken as its lattice norm: $\ls
v\rs:=|v|=v\vee(-v);\,v\in E$. Decomposability of this norm easily
follows from the Riesz Decomposition Property holding in every
vector lattice. If $E=\Bbb{R}$ then $V$ is a normed space. Many
nontrivial examples of lattice-normed spaces a reader can be found
in the book \cite{Kus}.

Let $E$ be a Banach lattice and let $(V,E)$ be a lattice-normed
space. Since $\ls x\rs\in E_{+}$ for every $x\in V$  we can define a
\textit{mixed norm} in $V$ by the formula
$$
\||x|\|:=\|\ls x\rs\|\,\,\,(\forall\, x\in V).
$$
The normed space $(V,\||\cdot|\|)$ is called a \textit{space with a
mixed norm}. In view of the inequality $|\ls x\rs-\ls y\rs|\leq\ls
x-y\rs$ and monotonicity of the norm in $E$, we have
$$
\|\ls x\rs-\ls y\rs\|\leq\||x-y|\|\,\,\,(\forall\, x,y\in V),
$$
so a vector norm is a norm continuous operator from
$(V,\||\cdot|\|)$ to $E$. A lattice-normed space $(V,E)$ is called a
\textit{Banach space with a mixed norm} if the normed space
$(V,\||\cdot|\|)$ is complete with respect to the norm convergence.

Consider lattice-normed spaces $(V,E)$ and $(W,F)$, a linear
operator $T:V\rightarrow W$ and a positive operator $S\in
L_{+}(E,\,F)$. If the condition
$$
\ls Tv\rs\leq S\ls v\rs;\,(\forall\, v\in V)
$$
is satisfied then we say that $S$ \textit{dominates} or
\textit{majorizes} $T$ or that $S$ is \textit{dominant} or
{majorant} for $T$. In this case $T$ is called a \textit{dominated}
or \textit{majorizable} operator.  The set of all dominants of the
operator $T$ is denoted by $\text{maj}(T)$. If there is the least
element in $\text{maj}(T)$ with respect to the order induced by
$L_{+}(E,F)$ then it is called the {\it least} or the {\it exact
dominant} of $T$ and it is denoted by $\ls T\rs$.

We follow \cite{P-1} in the next definition.

\begin{defn} \label{def:nar1}
Let $(V,E)$ be an lattice-normed space over an atomless vector
lattice $E$ and $X$ a vector space. A linear operator $T: V \to X$
is called:
\begin{itemize}
  \item \emph{strictly narrow} if for every $v \in V$ there exists a decomposition $v = v_{1} \sqcup v_{2}$ of $v$ such that $T(v_{1}) = T(v_{2})$;
  \item \emph{narrow} if $X$ is a normed space, and for every $v \in V$ and every $\varepsilon > 0$ there exists a decomposition $v = v_{1} \sqcup v_{2}$ of $v$ such that $\|T(v_1 - v_2)\| < \varepsilon$;
  \item \emph{order narrow} if $X$ is a Banach space with a mixed norm, and for every $v \in V$ there exists a net of decompositions $v = v_{\alpha}^{1} \sqcup v_{\alpha}^{2}$ such that $T(v_{\alpha}^{1}-v_{\alpha}^{2})\overset{(bo)}\longrightarrow 0$.
\end{itemize}
\end{defn}

A linear operator $T$ from a lattice-normed space $V$ to a Banach
space $X$ is called:
\begin{itemize}
  \item \textit{order-to-norm} $\sigma$-continuous if $T$ sends $(bo)$-convergent sequences in $V$ to norm convergent sequences in $X$;
  \item \textit{order-to-norm} continuous provided $T$ sends $(bo)$-convergent nets in $V$ to norm convergent nets in $X$.
\end{itemize}

Necessary information on Boolean algebras  can be found, for
instance, in \cite{Jec}, \cite{Kus}, \cite{LZ}. The most common
example of a Boolean algebra is an algebra $\mathcal A$ of subsets
of a set $\Omega$, that is, a subset of the set $\mathcal P(\Omega)$
of all subsets of $\Omega$, closed under the union, intersection and
complementation and containing $\emptyset$ and $\Omega$. The Boolean
operations on $\mathcal A$ are $A \boldsymbol{\vee} B = A \cup B$;
$A \boldsymbol{\wedge} B = A \cap B$ and $\neg A = \Omega \setminus
A$, and the constants are $\mathbf{0} = \emptyset$, $\mathbf{1} =
\Omega$.

A map $h: \mathcal A \to \mathcal B$ between two Boolean algebras is
called a \emph{Boolean homomorphism} if the following conditions
hold for all $x,y \in \mathcal A$
\begin{enumerate}
  \item $h(\mathbf{0}) = \mathbf{0}$;
  \item $h(\mathbf{1}) = \mathbf{1}$;
  \item $h(x \boldsymbol{\vee} y) = h(x) \boldsymbol{\vee} h(y)$;
  \item $h(x \boldsymbol{\wedge} y) = h(x) \boldsymbol{\wedge} h(y)$;
  \item $h(\neg x) = \neg h(x)$.
\end{enumerate}

A bijective Boolean homomorphism which is called a \emph{Boolean
isomorphism}. Two Boolean algebras $\mathcal A$ and $\mathcal B$ are
called \emph{Boolean isomorphic} if there is a Boolean isomorphism
$h: \mathcal A \to \mathcal B$. The following remarkable result is
known as the Stone representation theorem.

\begin{thm} \label{th:Stone} {\rm(\cite[Theorem~7.11]{Jec})}
Every Boolean algebra is Boolean isomorphic to an algebra of subsets
of some set.
\end{thm}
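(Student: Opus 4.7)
The plan is to construct, for a given Boolean algebra $\mathcal{A}$, a set $\Omega$ together with an injective Boolean homomorphism $h\colon \mathcal{A}\to \mathcal{P}(\Omega)$; the image $h(\mathcal{A})$ is then an algebra of subsets of $\Omega$ that is Boolean-isomorphic to $\mathcal{A}$. I would take $\Omega$ to be the Stone space $S(\mathcal{A})$, namely the set of all ultrafilters on $\mathcal{A}$ (equivalently, all Boolean homomorphisms $\mathcal{A}\to\{\mathbf{0},\mathbf{1}\}$, or dually, all prime ideals). For each $a\in \mathcal{A}$ define $\widehat{a}:=\{U\in S(\mathcal{A}) : a\in U\}$ and set $h(a):=\widehat{a}$.

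The first step is to check that $h$ satisfies the five homomorphism axioms listed immediately before the theorem. Using the defining properties of an ultrafilter $U$ --- upward-closure, closure under finite meets, and the dichotomy that for every $a\in \mathcal{A}$ exactly one of $a$ and $\neg a$ lies in $U$ --- one verifies directly that $\widehat{\mathbf{0}}=\emptyset$, $\widehat{\mathbf{1}}=\Omega$, $\widehat{a\boldsymbol{\wedge} b}=\widehat{a}\cap \widehat{b}$, $\widehat{a\boldsymbol{\vee} b}=\widehat{a}\cup \widehat{b}$, and $\widehat{\neg a}=\Omega\setminus \widehat{a}$. These five identities translate the Boolean operations of $\mathcal{A}$ into the set-theoretic operations on $\mathcal{P}(\Omega)$.

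The second step is injectivity. If $a\neq b$ then, after swapping the two if necessary, $a\boldsymbol{\wedge}\neg b\neq \mathbf{0}$. The principal filter generated by $a\boldsymbol{\wedge}\neg b$ is proper, and extending it to an ultrafilter $U$ gives $a\in U$ and $b\notin U$, so $\widehat{a}\neq \widehat{b}$. Combined with the homomorphism property, this shows that $h(\mathcal{A})$ is a Boolean subalgebra of $\mathcal{P}(S(\mathcal{A}))$ Boolean-isomorphic to $\mathcal{A}$, which is the statement of the theorem.

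The main obstacle is the ultrafilter lemma invoked in the injectivity step: every proper filter on a Boolean algebra is contained in some ultrafilter. This is the only non-elementary ingredient, and I would establish it via Zorn's lemma applied to the poset of proper filters extending a given one, ordered by inclusion; unions of chains remain proper (since $\mathbf{0}$ belongs to none of the members), so a maximal element exists, and maximality forces the ultrafilter dichotomy, because otherwise one of $a$ or $\neg a$ could be adjoined without producing $\mathbf{0}$. Without the axiom of choice (or at least the Boolean prime ideal theorem) the Stone space can fail to separate points --- indeed it can be empty for nontrivial $\mathcal{A}$ --- so essentially all the real content of the theorem is concentrated in this lemma; everything else is routine verification.
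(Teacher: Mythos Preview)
The paper does not actually prove this theorem; it is stated as a known result with a citation to Jech's \emph{Set Theory} and used later only in the proof of Theorem~\ref{thm:first}. So there is no in-paper argument to compare against.

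That said, your outline is the standard proof of Stone's representation theorem and is correct. The verifications of the five homomorphism identities from the ultrafilter axioms are routine, and the injectivity step is exactly where the Boolean prime ideal theorem (equivalently, the ultrafilter lemma) enters, just as you describe. Your remark that without some form of choice the Stone space may fail to separate points is also accurate. Nothing is missing.
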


Every  Boolean algebra $\mathcal A$ is a partially ordered set with
respect to the partial order $``x \leq y$ if and only if $x
\boldsymbol{\wedge} y = x"$, with respect to which $\mathbf{0}$ is
the least element, $\mathbf{1}$ is the greatest element, $x
\boldsymbol{\wedge} y$ is the infimum and $x \boldsymbol{\vee} y$
the supremum of the two-point set $\{x,y\}$ in $\mathcal A$. A
Boolean algebra $\mathcal A$ is called Dedekind complete (resp.,
$\sigma$-Dedekind complete) if so is $\mathcal A$ as a partially
ordered set, that is, if every (resp., countable) order bounded
nonempty subset of $\mathcal A$ has the least upper and the greatest
lower bounds in $\mathcal A$. Obviously, a Boolean algebra is
$\sigma$-Dedekind complete if and only if it is a $\sigma$-algebra.
A Boolean algebras $A$ can be viewed as an algebra over field
$\Bbb{Z}_{2}:=\{\boldsymbol{0},\boldsymbol{1}\}$ with respect of an
algebraic operations:
$$
x+y:=x\triangle y,\,xy:=x\wedge y \,\,(x,y\in A),
$$
where $x\triangle y:=(x\wedge y^{\star})\vee(x^{\star}\wedge y)$ is
a {\it symmetric difference} of $x$ and $y$.

There is a natural connection between Boolean algebras and
lattice-normed spaces. Let $(V,E)$ be a lattice-normed space. Given
$L\subset E$ and $M\subset V$, we let by definition $h(L)=\{v\in
V:\,\ls v\rs\in L\}$ and $\ls M\rs=\{\ls v\rs:\, v\in M\}$. It is
clear that $\ls h(L)\rs\subset L\bigcap\ls V\rs$.

\begin{prop}\label{lat-norm} {\rm(\cite[Proposition~2.1.2]{Kus})}
Suppose that every band of vector lattice $E_{0}=\ls
V\rs^{\bot\bot}$ contains the norm of some nonzero element. Then the
set of all bands of the lattice-normed space $V$ is a complete
Boolean algebra and the map $L\mapsto h(L)$ is an isomorphism of the
Boolean algebras of bands of $E_{0}$ and $V$.
\end{prop}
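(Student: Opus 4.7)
The plan is to establish both conclusions simultaneously by showing that $L\mapsto h(L)$ is a Boolean isomorphism from the bands of $E_{0}$ onto the bands of $V$. Since the bands of any archimedean vector lattice form a complete Boolean algebra under $\cap$ and ${}^{\bot}$, completeness of the band lattice of $V$ will then follow automatically by transport along $h$.

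The engine of the proof is the identity
$$
h(L)^{\bot}=h(L^{\bot})
$$
for every band $L\subset E_{0}$. The inclusion $h(L^{\bot})\subset h(L)^{\bot}$ is immediate from the definition of disjointness in $V$. For the reverse, I would argue by contradiction: given $v\in h(L)^{\bot}$ with $\ls v\rs\notin L^{\bot}$, pick $\ell\in L_{+}$ with $e:=\ls v\rs\wedge\ell>0$; then $e\in L\cap\ls v\rs^{\bot\bot}$, and the standing hypothesis, applied to the nontrivial band $e^{\bot\bot}$ of $E_{0}$, produces some $u\neq 0$ with $\ls u\rs\in e^{\bot\bot}\subset L\cap\ls v\rs^{\bot\bot}$. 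Since $\ls u\rs\in L$ we have $u\in h(L)$, so $u\bot v$ and hence $\ls u\rs\in\ls v\rs^{\bot}$; combined with $\ls u\rs\in\ls v\rs^{\bot\bot}$ this forces $\ls u\rs=0$, a contradiction. The identity just proved simultaneously shows that $h(L)=h(L^{\bot})^{\bot}$ is a band of $V$ and that $h$ preserves complementation.

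Injectivity of $h$ is a close variant: if $L_{1}\neq L_{2}$, then one of the bands $L_{1}\cap L_{2}^{\bot}$, $L_{1}^{\bot}\cap L_{2}$ of $E_{0}$ is nontrivial, and the hypothesis yields $u\neq 0$ whose norm lies in that band, so $u$ belongs to exactly one of $h(L_{1})$, $h(L_{2})$. For surjectivity, given a band $K\subset V$ set $L:=\ls K\rs^{\bot\bot}$; the inclusion $K\subset h(L)$ is immediate, and the reverse uses that any $u\in K^{\bot}$ satisfies $\ls u\rs\in\ls K\rs^{\bot}$, so any $v\in h(L)$ is disjoint from every such $u$ and therefore lies in $K^{\bot\bot}=K$. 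Preservation of meets is transparent from $h(L_{1}\cap L_{2})=h(L_{1})\cap h(L_{2})$, and joins are recovered via De Morgan from meets and complements.

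The main obstacle is the nontrivial direction $h(L)^{\bot}\subset h(L^{\bot})$ in the key identity. Without the density-type hypothesis, nontrivial bands of $E_{0}$ could contain no norms of nonzero vectors; $h$ would then fail to reflect disjointness and could collapse distinct bands of $E_{0}$ onto the same subset of $V$, so neither injectivity nor the homomorphism property would survive. The hypothesis is used precisely to produce, inside each nontrivial band of $E_{0}$, a witness $u\in V\setminus\{0\}$ whose norm lies in that band.
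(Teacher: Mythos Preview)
The paper does not prove this proposition at all; it is stated with a citation to \cite[Proposition~2.1.2]{Kus} and then used as a black box. So there is no ``paper's own proof'' to compare against.

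That said, your argument is sound. The key identity $h(L)^{\bot}=h(L^{\bot})$ is established correctly: the nontrivial inclusion $h(L)^{\bot}\subset h(L^{\bot})$ is exactly where the density hypothesis is needed, and your contradiction argument via $e^{\bot\bot}\subset L\cap\ls v\rs^{\bot\bot}$ is clean. From this identity, $h(L)=h(L^{\bot})^{\bot}$ shows $h(L)$ is indeed a band of $V$ and that $h$ respects complementation. Injectivity and the meet-preservation $h(L_{1}\cap L_{2})=h(L_{1})\cap h(L_{2})$ are straightforward, and your surjectivity argument via $L=\ls K\rs^{\bot\bot}$ is correct once one notes that bands of $V$ satisfy $K^{\bot\bot}=K$ (immediate from the paper's definition of a band as a set of the form $M^{\bot}$). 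Completeness of the Boolean algebra of bands of $V$ then transports from that of $E_{0}$, as you say.

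One small point worth making explicit in a final write-up: the orthogonal complements defining $L=\ls K\rs^{\bot\bot}$ should be taken in $E_{0}$ (not in $E$), so that $L$ is a band of $E_{0}$ as required; since $E_{0}$ is itself a band of $E$ this causes no real difficulty, but it is good hygiene to say where the complements live.
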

All lattice-normed spaces  considered below are decomposable and
satisfy the proposition \ref{lat-norm}.

\section{Vector measures on Boolean algebras}

By a \emph{measure} on a Boolean algebra $\mathcal A$ we mean a
finitely additive function $\mu: \mathcal A \to X$ of $\mathcal A$
to a vector space $X$, that is, a map satisfying
$$
(\forall x,y \in \mathcal A) \Bigl( \bigl( x \boldsymbol{\wedge} y =
\mathbf{0} \bigr) \Rightarrow \bigl( \mu(x + y) = \mu(x) + \mu(y)
\bigr) \Bigr).
$$

If, moreover, $\mathcal A$ is a Boolean $\sigma$-algebra and $X$ is
a topological vector space then a $\sigma$-\emph{additive measure}
is a measure $\mu: \mathcal A \to X$ possessing the property that if
$(x_n)_{n=1}^\infty$ is a sequence in $\mathcal A$ with $x_n
\uparrow x \in \mathcal A$ then $\lim\limits_{n \to \infty} \mu(x_n)
= \mu(x)$.

\subsection{Definitions and simple properties.}
We follow \cite{MPPS} in  the definitions below.

\begin{defn}
Let $\mathcal A$ be a Boolean algebra and $X$ a normed space. A
measure $\mu: \mathcal A \to X$ is called \emph{almost dividing} if
for every $x \in \mathcal A$ and every $\varepsilon > 0$ there is a
decomposition $x = y \sqcup z$ with $\|\mu(y) - \mu(z)\| <
\varepsilon$.
\end{defn}

\begin{defn}
Let $\mathcal A$ be a Boolean algebra and $V$ a lattice-normed
space. A measure $\mu: \mathcal A \to V$ is called \emph{order
dividing} if for every $x \in \mathcal A$ there is a net of
decompositions $x = y_\alpha \sqcup z_\alpha$ with
$\bigl(\mu(y_\alpha) - \mu(z_\alpha) \bigr)
\stackrel{(bo)}{\longrightarrow} 0$.
\end{defn}

\begin{defn}
Let $\mathcal A$ be a Boolean algebra and $X$ a vector space. A
measure $\mu: \mathcal A \to X$ is called \emph{dividing} if for
every $x \in \mathcal A$ there is a decomposition $x = y \sqcup z$
with $\mu(y) = \mu(z)$.
\end{defn}

\begin{defn} \label{def:ass}
Let $V$ be a lattice-normed space, $X$ a vector space. To every
linear  operator $T: V \to X$ we associate a family of measures
$(\mu_v^{_T})_{v \in V}$ as follows. Given any $v \in V$, we define
a measure $\mu_v^{_T}: \mathfrak{F}_v \to X$ on the Boolean algebra
$\mathfrak{F}_v$ of fragments of $v$ by setting $\mu_v^{_T} u =
T(u)$, $\mu_v^{_T}$ is called the \emph{associated measure} of $T$
at $v$.
\end{defn}

The next proposition directly follows from the definitions.

\begin{prop} \label{pr:dirfol}
Let $(V,E)$ be a lattice-normed space, $E$ be an atomless vector
lattice, $X$ a vector space and $T: V \to X$ a linear operator. Then
the following assertions hold.
\begin{enumerate}
  \item[$(1)$] $T$ is strictly narrow if and only if for every $v \in V$ the measure $\mu_v^{_T}$ is dividing.
  \item[$(2)$] Let $X$ be a normed space. Then $T$ is narrow if and only if for every $v \in V$ the measure $\mu_v^{_T}$ is almost dividing.
  \item[$(3)$] Let $X$ be a lattice-normed space. Then $T$ is order narrow if and only if for every $v \in V$ the measure $\mu_v^{_T}$ is order dividing.
\end{enumerate}
\end{prop}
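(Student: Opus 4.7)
The three parts of the proposition share the same structure, so the plan is to argue them in parallel: each is a dictionary translation between a narrowness property of $T$ and the corresponding ``dividing'' property of the family $(\mu_v^T)_{v\in V}$, and the entire content of the statement is the matching of quantifiers on the two sides.

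First I would record the only non-trivial prerequisite: the fragment relation $\sqsubseteq$ on $V$ is transitive, and consequently, if $x\in\mathfrak{F}_v$ and $x=x_1\sqcup x_2$ is a disjoint decomposition in $V$, then $x_1,x_2\in\mathfrak{F}_v$; in other words $\mathfrak{F}_x$ is a Boolean subalgebra of $\mathfrak{F}_v$. This follows from the triangle inequality $\ls v-x_1\rs\le\ls v-x\rs+\ls x-x_1\rs$ together with $\ls x_1\rs\le\ls x\rs$ and the disjointness relations defining $x_1\sqsubseteq x$ and $x\sqsubseteq v$, using that disjoints commute with bounded majorization under $\ls\cdot\rs$.

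For (1), the forward direction takes $v\in V$ and an arbitrary $x\in\mathfrak{F}_v\subset V$; strict narrowness of $T$ applied to $x$ yields $x=x_1\sqcup x_2$ with $T(x_1)=T(x_2)$, and by the preparatory observation the decomposition lives in $\mathfrak{F}_v$, so $\mu_v^T(x_1)=\mu_v^T(x_2)$ and $\mu_v^T$ is dividing. The reverse direction is obtained by specializing the dividing property of $\mu_v^T$ to $x=v\in\mathfrak{F}_v$. For (2) and (3), linearity of $T$ gives $\mu_v^T(y)-\mu_v^T(z)=T(y-z)$, so the normed condition $\|T(v_1-v_2)\|<\varepsilon$ reads as $\|\mu_v^T(y)-\mu_v^T(z)\|<\varepsilon$ in the almost-dividing definition, and the net condition $T(v_\alpha^1-v_\alpha^2)\overset{(bo)}\longrightarrow 0$ reads as $\mu_v^T(y_\alpha)-\mu_v^T(z_\alpha)\overset{(bo)}\longrightarrow 0$ in the order-dividing definition; the forward/backward arguments of (1) then carry over verbatim.

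The proposition has no real obstacle beyond quantifier bookkeeping, which is exactly why the author flags it as directly following from the definitions. The only point demanding a moment of care is the transitivity observation, since without it the decomposition produced by applying narrowness to a fragment $x$ of $v$ would not automatically live inside the Boolean algebra $\mathfrak{F}_v$, which is the domain of the associated measure $\mu_v^T$.
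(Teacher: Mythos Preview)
Your proposal is correct and matches the paper's approach, which consists of the single sentence ``The next proposition directly follows from the definitions.'' You have spelled out exactly the quantifier matching the paper has in mind, and your attention to the transitivity of $\sqsubseteq$ (so that a decomposition of $x\in\mathfrak{F}_v$ produced by narrowness of $T$ actually lies in $\mathfrak{F}_v$) is the one point that genuinely deserves comment; the paper leaves this implicit.
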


Obviously, a dividing measure is both almost dividing and order
dividing, for an appropriate range space. The following three
propositions are close to propositions~10.7 and~10.9, and
Example~10.8 from \cite{PR}.

\begin{prop} \label{pr:10.7}
Let $\mathcal A$ be a Boolean algebra and $W$ a Banach space with a
mixed norm. Then every almost dividing measure $\mu: \mathcal A \to
W$ is order dividing.
\end{prop}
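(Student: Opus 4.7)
My plan is to derive order dividing of $\mu$ from almost dividing by exploiting the fact that in a Banach space with mixed norm $W=(W,F)$, sufficiently fast norm convergence upgrades to $(bo)$-convergence. Since $\|w\|=\|\ls w\rs\|_F$ for every $w\in W$, the almost dividing hypothesis asserts that for each $x\in\mathcal A$ and each $\varepsilon>0$ there is a decomposition $x=y\sqcup z$ with $\|\ls\mu(y)-\mu(z)\rs\|_F<\varepsilon$.

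Fix $x\in\mathcal A$. For each $n\in\Bbb{N}$ I would apply almost dividing with $\varepsilon=2^{-n}$ to produce a decomposition $x=y_n\sqcup z_n$ such that $f_n:=\ls\mu(y_n)-\mu(z_n)\rs$ satisfies $\|f_n\|_F<2^{-n}$. The sequence of decompositions $(y_n,z_n)_{n\in\Bbb{N}}$ is the candidate net witnessing order dividing at $x$; it remains to build a regulator in $F_+$ dominating the tails of $(f_n)$.

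Because $f_n\ge 0$ and $\|f_n\|_F<2^{-n}$, the partial sums $s_N=\sum_{n=1}^{N}f_n$ are increasing and norm-Cauchy in $F$, so by completeness of the Banach lattice $F$ they converge in norm to some $s\in F$. Norm-closedness of the positive cone then gives $s_N\le s$ for every $N$; set $e_m:=s-s_{m-1}$. Then $e_m\in F_+$, $(e_m)$ is decreasing, $\|e_m\|_F\le 2^{-(m-1)}$, and $f_n\le e_m$ for all $n\ge m$.

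The last and slightly delicate step is to check $\inf_{m}e_m=0$ in $F$. If $h$ is a lower bound of $(e_m)$, then $h^+\le e_m$ for every $m$, hence $0\le\|h^+\|_F\le\|e_m\|_F\to 0$, so $h^+=0$ and $h\le 0$; therefore $0$ is the greatest lower bound. Choosing $\alpha(m):=m$ in the definition of $(bo)$-convergence yields $\mu(y_n)-\mu(z_n)\stackrel{(bo)}{\longrightarrow}0$ in $W$, which is exactly order dividing at $x$. I expect the whole argument to be short; the main technical ingredients are norm completeness of $F$ and norm-closedness of its positive cone, both built into the Banach lattice assumption.
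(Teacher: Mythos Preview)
Your argument is correct and follows the same approach as the paper: choose decompositions with $\|\ls\mu(y_n)-\mu(z_n)\rs\|<2^{-n}$ and use the tail sums $e_m=\sum_{k\ge m}\ls\mu(y_k)-\mu(z_k)\rs$ as the decreasing regulator. Your version is in fact more careful than the paper's, since you explicitly justify that the tail sums exist (by completeness of $F$), dominate the $f_n$, and have order-infimum $0$, whereas the paper asserts $u_n\downarrow 0$ without comment.
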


\begin{proof}
Let $\mu: \mathcal A \to W$ be an almost dividing measure and $x \in
\mathcal A$. Choose a sequence of decompositions $x = y_n \sqcup
z_n$ with $|\|\mu(y_n) - \mu(z_n)|\| \leq 2^{-n}$. Then for $u_n =
\sum_{k=n}^\infty \ls\mu(y_n) - \mu(z_n)\rs$ one has $\ls\mu(y_n) -
\mu(z_n)\rs \leq u_n \downarrow 0$. Hence, $\bigl(\mu(y_n) -
\mu(z_n) \bigr) \stackrel{(bo)}{\longrightarrow} 0$.
\end{proof}

\begin{prop} \label{pr:10.8}
Let $\Sigma$ be the Boolean $\sigma$-algebra of Lebesgue measurable
subsets of $[0,1]$. Then there exists an order dividing measure
$\mu: \Sigma \to L_\infty$ which is not dividing.
\end{prop}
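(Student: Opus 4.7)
The plan is to construct $\mu$ explicitly via the cumulative distribution map $\mu \colon \Sigma \to L_\infty[0,1]$ defined by $\mu(A)(y) := \lambda(A \cap [0,y])$, where $\lambda$ denotes Lebesgue measure. Each $\mu(A)$ is a continuous function taking values in $[0,\lambda(A)]$, so $\mu(A) \in L_\infty[0,1]$ with $\|\mu(A)\|_\infty \leq \lambda(A)$; $\sigma$-additivity follows from the uniform estimate $\|\mu(A) - \sum_{k=1}^{N} \mu(A_k)\|_\infty \leq \lambda(\bigcup_{k > N} A_k) \to 0$ whenever $A = \bigsqcup_k A_k$.

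To verify that $\mu$ is almost dividing, given $A \in \Sigma$ with $a := \lambda(A) > 0$ I would invoke continuity of $y \mapsto \lambda(A \cap [0,y])$ to produce, for each $n$, points $0 = y_0 < y_1 < \dots < y_{2n} = 1$ with $\lambda(A \cap [y_{k-1}, y_k]) = a/(2n)$ for every $k$, and set $B_n := A \cap \bigcup_{m=0}^{n-1} [y_{2m}, y_{2m+1}]$, $C_n := A \setminus B_n$. A short telescoping estimate on consecutive $y$-subintervals bounds $|\mu(B_n)(y) - \mu(C_n)(y)|$ by $a/n$ for every $y \in [0,1]$, so $\|\mu(B_n) - \mu(C_n)\|_\infty \to 0$; the order dividing property then follows immediately from Proposition~\ref{pr:10.7}.

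Failure of the dividing property is a quick consequence of the sensitivity of the construction: if $A = B \sqcup C$ satisfied $\mu(B) = \mu(C)$ in $L_\infty$, the two absolutely continuous functions $y \mapsto \lambda(B \cap [0,y])$ and $y \mapsto \lambda(C \cap [0,y])$ would agree a.e., hence pointwise by continuity; differentiating yields $\mathbf{1}_B = \mathbf{1}_C$ a.e., which combined with $B \cap C = \emptyset$ forces $\lambda(A) = 0$. Thus no set of positive Lebesgue measure admits a dividing decomposition. The only conceptual design choice here is to pick a $\mu$ simultaneously flexible enough to admit fine alternating decompositions (securing almost dividing) yet rigid enough to separate any nontrivial partition of $A$ (defeating dividing); I expect no further technical obstacle once these two features of the cumulative distribution are verified.
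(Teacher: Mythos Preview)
Your construction is correct: the cumulative-distribution measure $\mu(A)(y)=\lambda(A\cap[0,y])$ is $\sigma$-additive into $L_\infty$, the alternating partition into $2n$ equal-mass slices gives $\|\mu(B_n)-\mu(C_n)\|_\infty\le a/(2n)$ (your bound $a/n$ is safe), so Proposition~\ref{pr:10.7} yields the order dividing property, and the Lebesgue differentiation argument cleanly rules out any exact splitting of a set of positive measure.

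By way of comparison, the paper does not give a direct argument here at all: it simply invokes \cite[Example~10.8]{PR}, which exhibits an order narrow operator into $L_\infty$ that is not strictly narrow, and then reads off the measure via Proposition~\ref{pr:dirfol}. The underlying example in \cite{PR} is essentially the indefinite-integral operator, whose associated measure at $\mathbf 1_{[0,1]}$ is precisely your $\mu$; so you have, in effect, unpacked the cited example and supplied a self-contained proof. What you gain is independence from the reference and an explicit estimate on the rate of almost-division; what the paper's route buys is brevity and the reminder that such measures arise canonically from operators.
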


\begin{proof}
Use Proposition~\ref{pr:dirfol} and \cite[Example~10.8]{PR}.
\end{proof}

\begin{prop} \label{pr:10.9}
Let $\mathcal A$ be a Boolean algebra and $(W,F)$ a Banach space
with a mixed norm, where $F$ is   an order continuous Banach
lattice. Then a measure $\mu: \mathcal A \to W$ is order dividing if
and only if it is almost dividing.
\end{prop}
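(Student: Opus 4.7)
The forward implication ``almost dividing $\Rightarrow$ order dividing'' is already Proposition~\ref{pr:10.7} and requires no additional assumption on $F$. The content of the proposition is therefore the converse, and this is where order continuity of $F$ enters.

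My plan is to unpack the definition of $(bo)$-convergence and invoke order continuity to transfer it to norm convergence in $W$. Fix an order dividing measure $\mu \colon \mathcal A \to W$, an element $x \in \mathcal A$, and $\varepsilon > 0$. Choose a net of decompositions $x = y_\alpha \sqcup z_\alpha$ with $\mu(y_\alpha) - \mu(z_\alpha) \xrightarrow{(bo)} 0$ in $W$. By the definition of $(bo)$-convergence given in the Preliminaries, there is a net $(e_\gamma)_{\gamma \in \Gamma}$ in $F^+$ with $e_\gamma \downarrow 0$ such that for every $\gamma \in \Gamma$ some $\alpha(\gamma)$ satisfies
\[
\ls \mu(y_\alpha) - \mu(z_\alpha) \rs \leq e_\gamma \qquad (\alpha \geq \alpha(\gamma)).
\]

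Now I would use the hypothesis that $F$ is an order continuous Banach lattice: the relation $e_\gamma \downarrow 0$ forces $\|e_\gamma\|_F \to 0$. Pick $\gamma$ with $\|e_\gamma\|_F < \varepsilon$ and any $\alpha \geq \alpha(\gamma)$. By monotonicity of the norm on $F$ and the definition of the mixed norm,
\[
\||\mu(y_\alpha) - \mu(z_\alpha)|\| \;=\; \bigl\| \ls \mu(y_\alpha) - \mu(z_\alpha) \rs \bigr\|_F \;\leq\; \|e_\gamma\|_F \;<\; \varepsilon.
\]
Taking $y := y_\alpha$ and $z := z_\alpha$ produces a decomposition $x = y \sqcup z$ witnessing that $\mu$ is almost dividing.

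There is no real obstacle here: the argument is a one-line unpacking once the correct tools are in place. The only point requiring care is matching the net-version of order continuity to the net involved in the definition of $(bo)$-convergence, so that one does not accidentally need a sequential version of order continuity; this is fine under the standard convention that an order continuous Banach lattice satisfies $e_\gamma \downarrow 0 \Rightarrow \|e_\gamma\| \to 0$ for arbitrary nets.
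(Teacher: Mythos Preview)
Your proof is correct and follows essentially the same approach as the paper's own proof. The paper compresses the key step into the single remark ``by the order continuity of $F$, $\bigl\|\ls \mu(y_\alpha) - \mu(z_\alpha) \rs\bigr\| \to 0$,'' whereas you unpack the definition of $(bo)$-convergence via the dominating net $(e_\gamma)$ and then apply order continuity to that net; the underlying argument is identical.
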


\begin{proof}
Let $\mu: \mathcal A \to W$ be order dividing. Given any $x \in
\mathcal A$, let $x = y_\alpha \sqcup z_\alpha$ be a net of
decompositions with $\bigl(\mu(y_\alpha) - \mu(z_\alpha) \bigr)
\stackrel{(bo)}{\longrightarrow} 0$. By the order continuity of $F$,
$\bigl\|\ls \mu(y_\alpha) - \mu(z_\alpha) \rs\bigr\| \to 0$, and
hence, $\mu$ is almost dividing by arbitrariness of $x \in \mathcal
A$. By Proposition~\ref{pr:10.7}, the proof is completed.
\end{proof}

A nonzero element $u$ of a Boolean algebra $\mathcal A$ is called an
\emph{atom} if for every $x \in \mathcal A$ the condition $0 < x
\leq u$ implies that $x = u$. Every dividing (of any type) measure
sends atoms to zero.

\begin{prop} \label{pr:atom}
Let $\mathcal A$ be a Boolean algebra and $V$ a vector space (a
normed space, or a lattice-normed space) and $\mu: \mathcal A \to V$
a dividing (an almost dividing or an order dividing, respectively)
measure. If $a \in \mathcal A$ is an atom then $\mu(a) = 0$.
\end{prop}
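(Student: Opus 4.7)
The plan is to reduce all three cases to a single observation about the combinatorics of atoms in a Boolean algebra, together with the standard fact that any finitely additive measure kills $\mathbf{0}$.

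First I would establish the \emph{decomposition lemma}: if $a \in \mathcal A$ is an atom and $a = y \sqcup z$ (that is, $y \boldsymbol{\vee} z = a$ and $y \boldsymbol{\wedge} z = \mathbf{0}$), then $\{y,z\} = \{\mathbf{0}, a\}$. Indeed, $y \leq a$ and atomicity force $y \in \{\mathbf{0}, a\}$; in the case $y = a$, the relation $z \leq a = y$ combined with $z \boldsymbol{\wedge} y = \mathbf{0}$ gives $z = z \boldsymbol{\wedge} y = \mathbf{0}$. I would also record that $\mu(\mathbf{0}) = 0$ for any finitely additive measure, since $\mu(\mathbf{0}) = \mu(\mathbf{0} + \mathbf{0}) = 2\mu(\mathbf{0})$.

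With these two facts in hand I would treat the three cases in parallel. For the \emph{dividing} case, the hypothesized decomposition $a = y \sqcup z$ with $\mu(y) = \mu(z)$ must satisfy $\{y,z\} = \{\mathbf{0}, a\}$, whence $\mu(a) = \mu(\mathbf{0}) = 0$. For the \emph{almost dividing} case, given $\varepsilon > 0$, the decomposition $a = y \sqcup z$ with $\|\mu(y) - \mu(z)\| < \varepsilon$ again forces $\{y,z\} = \{\mathbf{0}, a\}$, which reads $\|\mu(a)\| < \varepsilon$; letting $\varepsilon \downarrow 0$ yields $\mu(a) = 0$.

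For the \emph{order dividing} case, the net of decompositions $a = y_\alpha \sqcup z_\alpha$ satisfies $\mu(y_\alpha) - \mu(z_\alpha) \in \{\mu(a), -\mu(a)\}$ for every $\alpha$, so by axiom 3 of the vector norm, $\ls \mu(y_\alpha) - \mu(z_\alpha) \rs = \ls \mu(a) \rs$ for all $\alpha$. Unpacking the definition of $(bo)$-convergence to zero, one obtains a decreasing net $(e_\gamma)$ with $\inf_\gamma e_\gamma = 0$ and $\ls \mu(a) \rs \leq e_\gamma$ for every $\gamma$; hence $\ls \mu(a) \rs = 0$, and axiom 1 gives $\mu(a) = 0$.

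There is essentially no obstacle: the only substantive point is the one-line Boolean decomposition lemma, and the rest is a translation of each definition through the identification $\{y,z\} = \{\mathbf{0}, a\}$. If anything, the mildly delicate step is confirming that the absolute-homogeneity of the vector norm handles the sign ambiguity in $\mu(y_\alpha) - \mu(z_\alpha) = \pm \mu(a)$ before passing to the $(bo)$-limit.
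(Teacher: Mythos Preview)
Your proof is correct; the paper itself leaves this proposition as ``an easy exercise'' and gives no argument, so there is nothing to compare against. The decomposition lemma for atoms together with the three case-by-case verifications you give is precisely the natural way to fill in the details.
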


The proof is an easy exercise.

\subsection{The range convexity of vector measures}

We need the following remarkable result known as the
Lyapunov\footnote{= Lyapounoff, the old spelling} convexity theorem.

\begin{thm}\label{th:Lyap} {\rm\cite[Theorem~2, p.9]{LTII}}
Let $(\Omega, \Sigma)$ be a measurable space, $X$ a finite
dimensional normed space and $\mu: \Sigma \to X$ an atomless
$\sigma$-additive measure. Then the range $\mu(\Sigma) = \{\mu(A):
\, A \in \Sigma\}$ of $\mu$ is a compact convex subset of $X$.
\end{thm}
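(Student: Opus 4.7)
The plan is to follow the classical Lindenstrauss argument based on the Krein--Milman theorem. Writing $\mu=(\mu_1,\ldots,\mu_n)$ in a basis of $X$ and setting $\nu=\sum_{i=1}^{n}|\mu_i|$, I would first observe that $\nu$ inherits atomlessness from $\mu$: an atom $A$ of $\nu$ would force every measurable $B\subseteq A$ to satisfy either $\nu(B)=0$ or $\nu(A\setminus B)=0$, hence $\mu(B)=0$ or $\mu(B)=\mu(A)$, making $A$ an atom of $\mu$. By the Radon--Nikod\'ym theorem, $\mu(A)=\bigl(\int_A f_1\,d\nu,\ldots,\int_A f_n\,d\nu\bigr)$ for some densities $f_i\in L^1(\nu)$.

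Next I would introduce the weak-$\ast$ compact convex set $W=\{\phi\in L^\infty(\nu):\,0\le\phi\le1\}$ and the linear map $T:L^\infty(\nu)\to X$ defined by $T\phi=\bigl(\int\phi f_i\,d\nu\bigr)_{i=1}^n$. This map is weak-$\ast$ to norm continuous and takes values in a finite dimensional space, so $T(W)$ is a compact convex subset of $X$. Since $T\chi_A=\mu(A)$, we have $\mu(\Sigma)\subseteq T(W)$ immediately, and the theorem reduces to the converse inclusion $T(W)\subseteq\mu(\Sigma)$, i.e.\ to showing that every value of $T$ on $W$ is already attained on an indicator.

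To establish this, fix $\phi\in W$ and consider the fibre $W_\phi=\{\psi\in W:T\psi=T\phi\}$, which is nonempty, convex and weak-$\ast$ compact, and hence possesses an extreme point $\psi_0$ by Krein--Milman. The crux of the argument, and what I expect to be the main obstacle, is to show that $\psi_0$ must be a characteristic function. Assume not: then there exist $\varepsilon>0$ and $B\in\Sigma$ with $\nu(B)>0$ and $\varepsilon\le\psi_0\le 1-\varepsilon$ on $B$. Using the atomlessness of $\nu$, I split $B$ into $n+1$ pairwise disjoint measurable pieces $B_1,\ldots,B_{n+1}$ of positive $\nu$-measure and look for a perturbation $h=\sum_{j=1}^{n+1}c_j\chi_{B_j}$ with $Th=0$; this amounts to a homogeneous system of $n$ linear equations in $n+1$ unknowns, which admits a nonzero solution, and after rescaling satisfies $|h|\le\varepsilon$. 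Then $\psi_0\pm h\in W_\phi$, contradicting the extremality of $\psi_0$. Therefore $\psi_0=\chi_A$ for some $A\in\Sigma$ and $T\phi=T\psi_0=\mu(A)$, proving $T(W)=\mu(\Sigma)$ and hence that $\mu(\Sigma)$ is compact and convex.
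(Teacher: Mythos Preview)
Your proof is correct: it is the classical Lindenstrauss argument via Krein--Milman, and every step goes through as you describe (the only point requiring a moment's care is that a $\nu$-atom $A$ must satisfy $\mu(A)\neq0$, but this follows since on a $\nu$-atom each density $f_i$ is $\nu$-a.e.\ constant, and these constants cannot all vanish).

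However, the paper does not supply its own proof of this theorem at all. Theorem~\ref{th:Lyap} is stated as a quotation of a known result, with the citation \cite[Theorem~2, p.~9]{LTII} to Lindenstrauss--Tzafriri, and is then used as a black box in the proof of Theorem~\ref{thm:first}. So there is nothing in the paper to compare your argument against. It is worth noting, though, that the proof given in the cited reference \cite{LTII} is exactly the argument you have written up, so your proposal reproduces precisely the proof the authors are pointing to.
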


The following theorem was proven in \cite{MPPS}, but for sake of
completeness we include the proof here.

\begin{thm} \label{thm:first}
Let $\mathcal A$ be a Boolean $\sigma$-algebra and $X$ a finite
dimensional vector space. Then every atomless $\sigma$-additive
measure $\mu: \mathcal A \to X$ is dividing.
\end{thm}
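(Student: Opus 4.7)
The plan is to reduce the claim to an immediate application of the Lyapunov convexity theorem (Theorem~\ref{th:Lyap}); the only genuine work is transferring from the abstract Boolean $\sigma$-algebra $\mathcal{A}$ to a concrete measurable space so that Theorem~\ref{th:Lyap} can be invoked verbatim.

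First I fix an arbitrary $x \in \mathcal{A}$ and restrict attention to the principal ideal $\mathcal{A}_{x} = \{y \in \mathcal{A} : y \leq x\}$, which is itself a Boolean $\sigma$-algebra with top element $x$ and on which $\mu$ remains $\sigma$-additive and atomless. To represent $\mathcal{A}_{x}$ concretely I appeal to the Loomis--Sikorski theorem rather than to Theorem~\ref{th:Stone}, since the latter only yields a Boolean isomorphism and does not respect countable suprema. Loomis--Sikorski supplies a set $\Omega$, a $\sigma$-algebra $\Sigma \subset \mathcal{P}(\Omega)$, and a $\sigma$-ideal $\mathcal{N} \subset \Sigma$ for which $\mathcal{A}_{x} \cong \Sigma / \mathcal{N}$ as Boolean $\sigma$-algebras. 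Composing $\mu$ with the quotient map yields a $\sigma$-additive vector measure $\widetilde{\mu} : \Sigma \to X$ that vanishes on $\mathcal{N}$, whose range coincides with $\mu(\mathcal{A}_{x})$, and which is again atomless (given $A \in \Sigma$ with $\widetilde{\mu}(A) \neq 0$, atomlessness of $\mu$ supplies a class $[C] \leq [A]$ in the quotient with $\widetilde{\mu}(C) \notin \{0, \widetilde{\mu}(A)\}$, and one may replace $C$ by $C \cap A$).

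Now Theorem~\ref{th:Lyap} applies to $\widetilde{\mu}$, so $\mu(\mathcal{A}_{x}) = \widetilde{\mu}(\Sigma)$ is a convex subset of $X$. Since $0 = \mu(\mathbf{0})$ and $\mu(x)$ both lie in this set, so does the midpoint $\tfrac{1}{2}\mu(x)$; I pick $y \in \mathcal{A}_{x}$ with $\mu(y) = \tfrac{1}{2}\mu(x)$ and set $z = x \wedge \neg y$. Then $x = y \sqcup z$ and $\mu(z) = \mu(x) - \mu(y) = \tfrac{1}{2}\mu(x) = \mu(y)$, showing that $\mu$ is dividing. I expect the representation step to be the main obstacle: Stone's theorem as cited in the paper is insufficient, so one must either invoke Loomis--Sikorski or, equivalently, quotient $\mathcal{A}_{x}$ by its $\mu$-null ideal and work with the resulting measure algebra.
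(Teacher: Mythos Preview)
Your argument follows the same overall strategy as the paper's: restrict to $\mathcal{A}_x$, represent it concretely as an algebra of sets, apply Lyapunov's convexity theorem, and extract the midpoint $\tfrac{1}{2}\mu(x)$. The difference lies entirely in the representation step, and your version is the more careful one. The paper invokes Stone's theorem (Theorem~\ref{th:Stone}) to obtain a Boolean isomorphism $J:\mathcal{A}_x\to\Sigma$ with $\Sigma$ an algebra of subsets of some $\Omega$, and then asserts that since $\mathcal{A}_x$ is a Boolean $\sigma$-algebra, $\Sigma$ is a $\sigma$-algebra of sets. As you point out, this step is not justified: a Boolean isomorphism transports countable suprema to countable suprema, but the supremum in the clopen algebra of a Stone space is generally not the set-theoretic union, so $\Sigma$ need not be closed under countable unions and the transferred measure $\nu=\mu\circ J^{-1}$ need not be $\sigma$-additive in the sense Theorem~\ref{th:Lyap} requires. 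Your use of Loomis--Sikorski repairs this: the quotient representation $\mathcal{A}_x\cong\Sigma/\mathcal{N}$ is a $\sigma$-isomorphism, and the lifted measure $\widetilde{\mu}$ on $\Sigma$ is genuinely countably additive on a $\sigma$-algebra of sets, so Lyapunov applies verbatim. The rest of the proof (convex range, midpoint, complement) is identical in both versions.
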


For a Boolean $\sigma$-algebra $\mathcal A$ and $x \in \mathcal
A\setminus\{0\}$ by $\mathcal A_x$ we denote the  Boolean
$\sigma$-algebra $ \{y \in \mathcal A: \, y \leq x\}$ with the unit
$\mathbf{1}_{\mathcal A_x} = x$ and the operations induced by
$\mathcal A$.

\begin{proof}
Let $\mu: \mathcal A \to X$ be an atomless $\sigma$-additive measure
and $x \in \mathcal A$. If $x = 0$ then there is nothing to prove.
Let $x \neq 0$. Then the restriction $\mu_x = \mu|_{\mathcal A_x}:
\mathcal A_x \to X$ is an atomless $\sigma$-additive measure. By
Theorem~\ref{th:Stone}, $\mathcal A_x$ is Boolean isomorphic to some
measurable space $(\Omega, \Sigma)$ by means of some Boolean
isomorphism $J: \mathcal A_x \to \Sigma$. Since $\mathcal A_x$ is a
Boolean $\sigma$-algebra, $\Sigma$ is a $\sigma$-algebra. Then the
map $\nu: \Sigma \to X$ given by $\nu(A) = \mu \bigl( J^{-1}(A)
\bigr)$ for all $A \in \Sigma$, is an atomless $\sigma$-additive
measure. By Theorem~\ref{th:Lyap}, the range $\nu(\Sigma)$ of $\nu$
is a convex subset of $X$. In particular, since $0, \nu
\bigl(J(x)\bigr) \in \nu(\Sigma)$, we have that $\nu
\bigl(J(x)\bigr)/2 \in \nu(\Sigma)$. Let $B \in \Sigma$ be such that
$\nu(B) = \nu \bigl(J(x)\bigr)/2 = \mu(x)/2$. Then for $y =
J^{-1}(B)$ one has that $y \leq x$ and $\mu(y) = \nu(B) = \mu(x)/2$.
Thus, for $z = x \wedge \neg y$ one has $x = y \sqcup z$ and $\mu(z)
= \mu(x) - \mu(y) = \mu(x)/2 = \mu(y)$.
\end{proof}

\subsection{Strict narrowness of order continuous finite rank operators}

The following theorem is the main result of this subsection. This
assertion strengthens Theorem~4.12 from \cite{P-1}. Using a  method
based on the Lyapunov theorem, we prove the strict narrowness of an
operator.

\begin{thm} \label{pr:relatedd8}
Let $(V,E)$ be a lattice-normed space, $E$  an atomless vector
lattice  with the principal projection property, $X$ a finite
dimensional normed space (resp., a lattice-normed space ). Then
every $\sigma$-order-to-norm continuous (resp., $\sigma$-order
continuous) linear operator $T: V \to X$ is strictly narrow.
\end{thm}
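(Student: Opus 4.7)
The plan is to invoke Proposition~\ref{pr:dirfol}(1), which reduces strict narrowness of $T$ to showing that for each $v\in V$ the associated measure $\mu_v^T:\mathfrak{F}_v\to X$ is dividing; since $X$ is finite-dimensional, Theorem~\ref{thm:first} will then close the argument, provided $\mathfrak{F}_v$ is a Boolean $\sigma$-algebra and $\mu_v^T$ is an atomless $\sigma$-additive measure.

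First I would check that $\mathfrak{F}_v$ is Boolean $\sigma$-complete. By Proposition~\ref{lat-norm} and decomposability, $\mathfrak{F}_v$ is isomorphic to the component Boolean algebra of $\ls v\rs$ inside the principal band $\{\ls v\rs\}^{\perp\perp}$, which under the principal projection property is Dedekind $\sigma$-complete. For $\sigma$-additivity, I would take a pairwise disjoint sequence $(u_n)$ of fragments of $v$ with Boolean sum $u$; by decomposability, $\ls u-\bigsqcup_{k\le n}u_k\rs=\sum_{k>n}\ls u_k\rs\downarrow 0$ in $E$, so the partial sums $(bo)$-converge to $u$ in $V$, and the $\sigma$-order-to-norm (resp., $\sigma$-order) continuity of $T$ yields $\sum_k T u_k=Tu$ in the ambient topology on $X$.

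The main obstacle is showing $\mu_v^T$ is atomless as a measure. Arguing by contradiction, suppose $u\in\mathfrak{F}_v$ is an atom of $\mu_v^T$, so $Tu\ne 0$ and $Tw\in\{0,Tu\}$ for every $w\sqsubseteq u$. A short linearity check shows that $W_0:=\{w\sqsubseteq u:Tw=0\}$ is a downward-closed subset of $\mathfrak{F}_u$ stable under finite Boolean suprema, and the $\sigma$-additivity just established makes it stable under countable increasing suprema. Zorn's lemma produces a maximal element $w^*\in W_0$, and $u^*:=u-w^*$ is nonzero (else $Tu=Tw^*=0$); if some nonzero $w'\sqsubseteq u^*$ satisfied $Tw'=0$, then $w^*\sqcup w'\in W_0$ would strictly dominate $w^*$, contradicting maximality. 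Hence every nonzero fragment of $u^*$ must satisfy $Tw'=Tu$, and atomlessness of $E$ combined with decomposability furnishes a nontrivial split $u^*=u^*_1\sqcup u^*_2$ with both parts nonzero; this gives $Tu^*=Tu^*_1+Tu^*_2=2Tu$, while $Tu^*=Tu-Tw^*=Tu$ forces $Tu=0$, a contradiction. Theorem~\ref{thm:first} then delivers that $\mu_v^T$ is dividing, and Proposition~\ref{pr:dirfol}(1) concludes that $T$ is strictly narrow. The delicate point throughout is the application of Zorn's lemma in a setting that is only $\sigma$-Dedekind complete; the $\sigma$-continuity of $T$ is what ensures the ideal $W_0$ is well-enough behaved for a maximal null fragment to exist.
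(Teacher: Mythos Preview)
Your proposal is correct and follows essentially the same route as the paper: the paper isolates the atomlessness and $\sigma$-additivity of $\mu_v^{T}$ as a separate lemma (proved, like yours, via Zorn's lemma applied to the set of null fragments and a nontrivial splitting coming from atomlessness of $E$), and then invokes Theorem~\ref{thm:first} exactly as you do. If anything, you are more explicit than the paper about why $\mathfrak{F}_v$ is a Boolean $\sigma$-algebra and about the tension between the $\sigma$-continuity hypothesis and the use of Zorn's lemma over possibly uncountable chains.
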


To use the technique of dividing vector measures, we preliminarily
need the $\sigma$-additivity of a measure.

\begin{lem} \label{pr:later}
Let $(V,E)$ be a lattice-normed space, $E$  an atomless vector
lattice  with the principal projection property,  $X$ a normed space
(resp., a lattice-normed  space), $T: V \to X$ an order-to-norm
continuous (resp., a order continuous) linear operator. Then for
every $v\in V$ the associated measure $\mu_v^{_T}$ is atomless and
$\sigma$-additive.
\end{lem}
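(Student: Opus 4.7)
The plan is to reduce the lemma to two separate claims: atomlessness of the Boolean algebra $\mathfrak{F}_v$ (which will transfer to the measure), and $\sigma$-additivity of $\mu_v^{_T}$. Both rest on a canonical Boolean isomorphism between $\mathfrak{F}_v$ and the Boolean algebra $\mathfrak{F}_{\ls v\rs}$ of components of $\ls v\rs$ in $E$, which I would establish first using decomposability of $(V,E)$ together with the principal projection property of $E$ (invoking Proposition~\ref{lat-norm}).

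The isomorphism is $u\mapsto\ls u\rs$. If $u\sqsubseteq v$, the disjointness $u\bot(v-u)$ forces $\ls v\rs=\ls u\rs+\ls v-u\rs$ with $\ls u\rs\wedge\ls v-u\rs=0$, so $\ls u\rs$ is a component of $\ls v\rs$. Conversely, given a component $e\sqsubseteq\ls v\rs$ with complement $e'$ (produced by a band projection thanks to the principal projection property), decomposability lifts the decomposition $\ls v\rs=e+e'$ to $v=v_1+v_2$ with $\ls v_1\rs=e$ and $\ls v_2\rs=e'$, giving a fragment $v_1\sqsubseteq v$. Injectivity relies on the correspondence of Proposition~\ref{lat-norm} between bands of $V$ and bands of $E_0=\ls V\rs^{\bot\bot}$, which forces any two fragments of $v$ with the same norm to coincide.

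Atomlessness of $\mathfrak{F}_v$ then follows at once from atomlessness of $E$: a nonzero fragment $u\sqsubseteq v$ has $\ls u\rs>0$, and since $E$ has no atoms this splits into two nonzero disjoint components $e_1,e_2$, which pull back via decomposability to a nontrivial $u=u_1\sqcup u_2$ with $u_i\ne 0$. For $\sigma$-additivity, take $u_n\uparrow u$ in $\mathfrak{F}_v$. The isomorphism promotes this to $\ls u_n\rs\uparrow\ls u\rs$ in $E$, and since $u_n\sqsubseteq u$ one has $\ls u-u_n\rs=\ls u\rs-\ls u_n\rs\downarrow 0$ in $E$. Hence $u_n\overset{(bo)}\longrightarrow u$ in $V$, and order-to-norm continuity (resp.\ order continuity) of $T$ yields $\mu_v^{_T}(u_n)=T(u_n)\to T(u)=\mu_v^{_T}(u)$ in norm (resp.\ in $(bo)$), which is exactly $\sigma$-additivity.

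The main obstacle is the careful verification of the Boolean isomorphism $\mathfrak{F}_v\cong\mathfrak{F}_{\ls v\rs}$ and of the additivity identity $\ls u-u_n\rs=\ls u\rs-\ls u_n\rs$ for nested fragments; once these lattice-theoretic identifications are in place, atomlessness transfers from $E$ with no extra work and $\sigma$-additivity is a direct consequence of the continuity hypothesis on $T$.
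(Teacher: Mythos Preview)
Your $\sigma$-additivity argument is correct and agrees with the paper's one-line treatment. The gap is in the atomlessness part: you prove that the Boolean algebra $\mathfrak{F}_v$ is atomless and assert parenthetically that this ``will transfer to the measure'', but that transfer is exactly what is at stake and it is not automatic. The lemma requires that the \emph{measure} $\mu_v^{_T}$ be atomless, meaning every $v_0\in\mathfrak{F}_v$ with $T(v_0)\neq 0$ splits as $v_0=w'\sqcup w''$ with $T(w')\neq 0$ and $T(w'')\neq 0$; this is what Lyapunov's theorem needs downstream. Atomlessness of the underlying Boolean algebra does not give this: a two-valued finitely additive measure on an atomless Boolean algebra (e.g.\ point evaluation on the clopen algebra of $2^\omega$) has every element of measure~$1$ as a measure-atom, and even with $\sigma$-additivity the implication requires further argument that you have not supplied.

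The paper fills this gap with a Zorn argument that genuinely uses the order continuity of $T$. Given $v_0$ with $T(v_0)\neq 0$, one takes a maximal fragment $z\sqsubseteq v_0$ with $T(z)=0$; order continuity of $T$ guarantees that chains in $\{u\sqsubseteq v_0:\,T(u)=0\}$ have upper bounds in the set, so such a maximal $z$ exists. Then $T(v_0-z)=T(v_0)\neq 0$, and \emph{now} your Boolean-algebra atomlessness is invoked to split $v_0-z=w_1\sqcup w_2$ nontrivially; maximality of $z$ forces $T(w_1)\neq 0$ and $T(w_2)\neq 0$, and $v_0=(z+w_1)\sqcup w_2$ is the desired decomposition. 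So your atomlessness of $\mathfrak{F}_v$ is a necessary ingredient, but it is only the first half of the argument; the Zorn step is what upgrades it to atomlessness of $\mu_v^{_T}$.
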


\begin{proof}[Proof of Lemma~\ref{pr:later}]
Fix any $v\in V$. The $\sigma$-additivity of $\mu_v^{_T}$ directly
follows from the order  continuity. We show that $\mu_v^{_T}$ is
atomless. Assume $v_0 \in \mathfrak{F}_v$ and $\mu_v^{_T}(v_0) \neq
0$, that is, $T(v_0) \neq 0$. Set $Z = \{u \in \mathfrak{F}_{x_0}:
\,\, T(u) = 0\}$. By the order continuity and Zorn's lemma, $Z$ has
a maximal element $z \in Z$. Since $T(z) = 0$, one has that $T(v_0 -
z) = T(z) + T(v_0 - z) = T(v_0) \neq 0$. Since $E$ is atomless, we
split $v_0 - z = w_{1} \sqcup w_{2}$ with $w_{1},w_{2} \in
\mathfrak{F}_{v_0} \setminus \{0\}$. By maximality of $z$, $T(w_{1})
\neq 0$ and $T(w_{2}) \neq 0$. Thus, $v_0 = (z + w_{1}) \sqcup
w_{2}$ is a decomposition with $\mu_v^{_T}(z + w_{1}) =
\mu_v^{_T}(w_{2}) \neq 0$ and $\mu_e^{_T} (w_{1}) \neq 0$.
\end{proof}

\begin{proof}[Proof of Theorem~\ref{pr:relatedd8}]
Let $v \in V$. By Lemma~\ref{pr:later}, the associated measure
$\mu_v^{_T}: \mathfrak{F}_v \to X$ is atomless and
$\sigma$-additive. By Theorem~\ref{thm:first}, $\mu_v^{_T}$ is
dividing. So, we split $v = v_{1} \sqcup v_{2}$ with
$\mu_v^{_T}(v_{1}) = \mu_v^{_T}(v_{2})$, that is, $T(v_{1}) =
T(v_{2})$.
\end{proof}

\section{Operators from arbitrary  to  atomic lattice-normed spaces  are order narrow}

\begin{defn}
An element $u$ of a lattice-normed space  $(V,E)$ is called an
\textit{atom}, whenever $0 \leq \ls v_{1}\rs \leq \ls u\rs$, $0
\leq\ls v_{2}\rs  \leq\ls u\rs$ and $v_{1}\bot v_{2}$ imply that
either $v_{1} = 0$ or $v_{2} = 0$.
\end{defn}
It is clear that $u\bot v$ for every  different atoms $u,v\in V$.
\begin{defn}
A lattice-normed space $V$ is said to be \textit{ atomic} if there
is a collection $(u_i)_{i \in I}$ of atoms in $V$, called a
\emph{generating collection of atoms}, such that $u_i \bot u_j$ for
$i \neq j$ and for every $v \in V$ if $\ls v\rs \wedge \ls u_i\rs =
0$ for each $i \in I$ then $v = 0$.
\end{defn}
It follows from the definition that the $(bo)$-ideal $V_{0}$
generated by generating collection of atoms coincides with $V$.
\begin{ex}
Let $X$ be a Banach space and consider the lattice-normed space
$(V,E)$, where $E=c_{0}$ or $E=l^{p}$; $0<p\leq\infty$ and
$$
V=\{(x_{n})_{n=1}^{\infty};\,x_{n}\in
X:\,(\|x_{n}\|)_{n=1}^{\infty}\in E\}.
$$
Elements $\{(0,\dots,x_{i},\dots,0,\dots):\,x_{i}\in
X;\,i\in\Bbb{N}\}$ are atoms and the space $V$ is an atomic
lattice-normed space.
\end{ex}

\begin{prop}\label{atom}
Let  $(V,E)$ be a  lattice-normed space such that  every
band of the vector lattice $E$ contains the vector norm of some
nonzero element. Then $V$ is atomic if and only if the vector
lattice $E$ is atomic.
\end{prop}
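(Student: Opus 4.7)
I would prove Proposition~\ref{atom} by moving decompositions back and forth between the lattice norm and the lattice via the decomposability axiom, together with the band-correspondence guaranteed by the standing hypothesis (Proposition~\ref{lat-norm}).

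\textbf{Plan for the forward direction.} Assume $V$ is atomic with generating collection $(u_i)_{i\in I}$. First I would show that each $\ls u_i\rs$ is an atom in $E$: if $0\le f_1,f_2\le \ls u_i\rs$ with $f_1\wedge f_2=0$, then $f_1+f_2\le \ls u_i\rs$, and writing $\ls u_i\rs=f_1+f_2+(\ls u_i\rs-f_1-f_2)$ I apply decomposability twice to split $u_i=v_1+v_2+v_3$ with $\ls v_k\rs=f_k$ for $k=1,2$. Since $\ls v_1\rs\wedge\ls v_2\rs=0$ we have $v_1\bot v_2$ and both are $\ls\cdot\rs$-dominated by $\ls u_i\rs$, so by atomicity of $u_i$ one of $v_1,v_2$ vanishes, whence $f_1=0$ or $f_2=0$. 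Pairwise disjointness of the $\ls u_i\rs$ is immediate from $u_i\bot u_j$. To check that they generate, take $e\in E_+$ with $e\wedge\ls u_i\rs=0$ for all $i$ and $e\neq0$; by the standing hypothesis the band $\{e\}^{\bot\bot}$ contains some $\ls v\rs\neq 0$, and since each $\ls u_i\rs$ lies in $\{e\}^\bot$ we get $\ls v\rs\wedge\ls u_i\rs=0$ for every $i$, contradicting atomicity of $V$.

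\textbf{Plan for the converse.} Let $(e_i)_{i\in I}$ be a generating collection of atoms in $E$. For each $i$ the band $\{e_i\}^{\bot\bot}$ is nonzero, so by the standing hypothesis it contains $\ls u_i\rs$ for some $u_i\neq 0$. The key small lemma I would establish is that in $\{e_i\}^{\bot\bot}_+$ no two nonzero elements are disjoint: if $f_1,f_2\in\{e_i\}^{\bot\bot}_+$ were disjoint and nonzero, then $f_k\wedge e_i$ would both be nonzero (for otherwise $f_k\in\{e_i\}^\bot\cap\{e_i\}^{\bot\bot}=\{0\}$) and lie in $[0,e_i]$, but they would be disjoint, contradicting that $e_i$ is an atom. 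Using this, whenever $v_1\bot v_2$ with $\ls v_1\rs,\ls v_2\rs\le\ls u_i\rs$ we see $\ls v_j\rs\in\{e_i\}^{\bot\bot}$ and are disjoint, forcing one of them to be zero; so $u_i$ is an atom in $V$. Pairwise disjointness $u_i\bot u_j$ follows from $\{e_i\}^{\bot\bot}\subseteq\{e_j\}^\bot$ for $i\neq j$.

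\textbf{Plan to verify the generating condition for $(u_i)$.} Let $v\in V$ satisfy $\ls v\rs\wedge\ls u_i\rs=0$ for every $i$. I would show $\ls v\rs\wedge e_i=0$ for every $i$, and then invoke atomicity of $E$ to conclude $\ls v\rs=0$, hence $v=0$. Suppose to the contrary that $e''\!:=\!\ls v\rs\wedge e_i>0$ for some $i$; set $e'\!:=\!\ls u_i\rs\wedge e_i$, which is strictly positive by the previous paragraph. Both $e',e''$ belong to $[0,e_i]$, which is a totally ordered set because $e_i$ is an atom, so $e'\le e''$ or $e''\le e'$. In either case $\min(e',e'')\le \ls v\rs\wedge\ls u_i\rs=0$, contradicting $e',e''>0$.

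\textbf{Expected obstacles.} The main nontrivial ingredient is the chain property of $[0,e_i]$ for an atom and the accompanying fact that the band $\{e_i\}^{\bot\bot}$ contains no pair of nonzero disjoint elements; this replaces the (possibly false) equality $\{e_i\}^{\bot\bot}=\mathbb{R}e_i$ in a general archimedean setting. The rest of the proof is a careful bookkeeping exercise with decomposability, which will require only a double application to lift arbitrary disjoint decompositions of $\ls u_i\rs$ from $E$ to $V$.
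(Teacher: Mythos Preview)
Your argument is correct.

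\textbf{Comparison with the paper.} For the step ``$u_i$ atom in $V$ $\Rightarrow$ $\ls u_i\rs$ atom in $E$'' the paper does not decompose $u_i$ itself. Instead it invokes the band hypothesis to produce auxiliary elements $v_1,v_2\in V$ with $\ls v_k\rs\in\{f_k\}^{\bot\bot}$, then decomposes each $v_k$ to extract $w_k$ with $\ls w_k\rs=\ls v_k\rs\wedge f_k\le\ls u_i\rs$, and finally applies the atomicity of $u_i$ to the pair $w_1,w_2$. Your route---writing $\ls u_i\rs=f_1+f_2+(\ls u_i\rs-f_1-f_2)$ and applying decomposability directly to $u_i$---is more elementary and avoids using the band hypothesis at this point; it only needs that $f_1\bot f_2$ implies $f_1+f_2\le\ls u_i\rs$. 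Both arguments use the band hypothesis in the same way to prove that the family $(\ls u_i\rs)$ is generating in $E$.

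For the converse the paper simply writes ``The converse assertion is obvious.'' Your detailed treatment---in particular the observation that no two nonzero positive elements of $\{e_i\}^{\bot\bot}$ can be disjoint, and that $[0,e_i]$ is totally ordered---supplies exactly what is needed without assuming $\{e_i\}^{\bot\bot}=\mathbb{R}e_i$, which indeed may fail in a merely Archimedean lattice. So your proof is strictly more complete than the paper's on this half, while agreeing in spirit.
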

\begin{proof}
Let $V$ be  atomic. We shall  prove that $E$ is atomic too. First we
prove that a norm of an arbitrary atom in $V$ is also an atom on
$E$. Assume that $v\in V$ is an atom, $e=\ls v\rs\in E^{+}$ and
there are  elements $f_{1},f_{2}$, such that $0<f_{1}\leq e$,
$0<f_{2}\leq e$ and $f_{1}\bot f_{2}$. Then the band
$F_{1}=\{f_{1}\}^{\bot\bot}$ is disjoint to the band
$F_{2}=\{f_{2}\}^{\bot\bot}$ and by our assumption there exist two
nonzero elements $v_{1},v_{2}\in V$, such that $\ls v_{1}\rs\in
F_{1}$ and $\ls v_{2}\rs\in F_{2}$. Then we may write
$$
\ls v_{i}\rs=\ls v_{i}\rs\wedge f_{i}+e_{i};\,e_{i}\geq 0;\,
i\in\{1,2\}.
$$
By decomposability of the vector norm there exist $w_{1},w_{2}\in
V$, so that $0\leq\ls w_{i}\rs=\ls v_{i}\rs\wedge f_{i}\leq e=\ls
v\rs$,  $i\in\{1,2\}$ and $w_{1}\bot w_{2}$. But this is a
contradiction and therefore $\ls v\rs$ is an atom in $E$. Let $e\in
E^{+}$ and $e\bot\ls v\rs$ for every atom $v\in V$. Then there
exists a nonzero element $h\in V$, such that $\ls
h\rs\in\{e\}^{\bot\bot}$. Hence $e=0$ and $E$ is an atomic vector
lattice. The converse assertion is obvious.
\end{proof}

The following theorem is the second  main result of the our paper.

\begin{thm} \label{th:sdfah}
Let $(V,E),(W,F)$ be  lattice-normed spaces,  where  $W$ is atomic,
$E,F$  are vector lattices with the  the principal projection
property and  $E$ is  atomless. Then every dominated, order
continuous  linear operator $T: V \to W$ is order narrow.
\end{thm}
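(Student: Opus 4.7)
The plan is to exploit the atomic structure of $W$ (and hence of $F$, by Proposition~\ref{atom}) to reduce order narrowness of $T$ to a family of ``finite-atom projection'' narrowness statements together with a vanishing tail estimate coming from the dominant. Fix $v\in V$. Let $(u_i)_{i\in I}$ be a generating collection of atoms of $W$, and set $f_i:=\ls u_i\rs$. Since $F$ has the principal projection property, for each finite $J\subseteq I$ there is a band projection $P_J:F\to F$ onto the finite-dimensional band $F_J:=\{f_i:i\in J\}^{\bot\bot}$, and by decomposability of $W$ a matching band projection $\pi_J:W\to W$ with $\ls\pi_J w\rs=P_J\ls w\rs$. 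Because the atoms generate $F$ as a band, $(I_F-P_J)f\downarrow 0$ for every $f\in F_+$ as $J$ runs through the directed set $\mathcal{J}$ of finite subsets of $I$; in particular, fixing a dominant $S\in\text{maj}(T)$, one has $(I_F-P_J)S\ls v\rs\downarrow 0$.

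By Lemma~\ref{pr:later}, the associated measure $\mu_v^{_T}:\mathfrak{F}_v\to W$ is atomless and $\sigma$-additive; so is its projection $\pi_J\circ\mu_v^{_T}:\mathfrak{F}_v\to\pi_J W$, whose vector-norm image lies in the finite-dimensional band $F_J$. The key technical step will be to show that $\pi_J T$ is narrow in the mixed norm: for each $\varepsilon>0$ there is a decomposition $v=v_1\sqcup v_2$ with $\|\ls\pi_J T(v_1-v_2)\rs\|<\varepsilon$. This is a Lyapunov-type convexity assertion for the atomless $\sigma$-additive vector measure $\pi_J\mu_v^{_T}$ into the Banach subspace $\pi_J W$; one can either invoke the Knowles convex-range-closure theorem for vector measures into Banach spaces, or, since the vector-norm image already lies in $F_J$, reduce to Theorem~\ref{thm:first} applied simultaneously to the finitely many scalar marginals $P_i\ls\pi_J\mu_v^{_T}(\cdot)\rs$ ($i\in J$).

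Combining with the tail estimate gives, for a suitable net of decompositions indexed by $J\in\mathcal{J}$ (with the prescribed $\varepsilon$ chosen small enough as a function of $J$),
$$
\ls T(v_1^J-v_2^J)\rs\;\leq\;\ls\pi_J T(v_1^J-v_2^J)\rs\;+\;(I_F-P_J)S\ls v\rs,
$$
where the first summand lies in the finite-dimensional $F_J$ with arbitrarily small $F$-norm and the second summand tends to $0$ in order. A routine bookkeeping then produces a dominating decreasing net in $F_+$ with infimum $0$, which yields $T(v_1^J-v_2^J)\overset{(bo)}{\longrightarrow}0$ and hence order narrowness. The main obstacle is the key technical step: since $\pi_J W$ may be infinite-dimensional as a vector space, Theorem~\ref{pr:relatedd8} does not apply directly, and one must adapt the Lyapunov argument to exploit only the finite-dimensionality of the vector-norm image $F_J$ --- which is exactly where the atomic structure of $W$ is used in an essential way.
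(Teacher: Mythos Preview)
Your overall architecture matches the paper's: project onto bands generated by finitely many atoms, get narrowness there, and kill the tail with the dominant via $(I-\mathbf{P}_J)f\downarrow 0$. The divergence is at what you call the ``main obstacle'', and this is where the proposal has a genuine gap.

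You worry that $\pi_J W$ may be infinite-dimensional and therefore Theorem~\ref{pr:relatedd8} cannot be applied directly. But this is precisely what Proposition~\ref{th:rhscb} rules out: for each atom $u_i$ the band projection $P_{u_i}$ satisfies $P_{u_i}g=a_i u_i$ for a scalar $a_i$, so $\pi_J W=\mathrm{span}\{u_i:i\in J\}$ is a finite-dimensional real vector space. Consequently $S_J=\pi_J\circ T$ is a finite rank, order continuous, dominated operator, and Theorem~\ref{pr:relatedd8} applies to give \emph{strict} narrowness: a decomposition $v=v_1^J\sqcup v_2^J$ with $\pi_J T(v_1^J)=\pi_J T(v_2^J)$ exactly. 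The first summand in your displayed estimate is then zero, and only the tail $(I-P_J)$ term remains; one gets $\ls T(v_1^J)-T(v_2^J)\rs\le 2(I-\mathbf{P}_J)f\overset{(o)}{\longrightarrow}0$.

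Your proposed workarounds both have problems in the stated setting. First, $F$ is only assumed to be a vector lattice with the principal projection property, not a Banach lattice, so there is no mixed norm $\|\ls\cdot\rs\|$ available and ``narrow in the mixed norm'' is not defined. Second, the ``scalar marginals'' $u\mapsto P_i\ls\pi_J T(u)\rs$ are not finitely additive (the vector norm is only subadditive), so Theorem~\ref{thm:first} does not apply to them. The clean route is the one you overlooked: $\pi_J W$ is already finite-dimensional, so no adaptation of Lyapunov is needed.
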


For the proof we need an auxiliary result that gives  a
representation of an element of an  atomic lattice-normed space via
atoms. Let $(W,F)$ be an  atomic lattice-normed space with a
generating collection of  atoms $(u_i)_{i \in I}$ and $F$ be a
vector lattice  with the principal projection property. Let
$\Lambda$ denote the directed set of all finite subsets of $I$
ordered by inclusion, that is, $\alpha \leq \beta$ for $\alpha,
\beta \in \Lambda$ if and only if $\alpha \subseteq \beta$. For
every $\alpha \in \Lambda$ we set
\begin{equation} \label{eq:apsow}
\mathbf{P}_\alpha = \sum_{i \in \alpha} P_{u_i},
\end{equation}
where $P_{u_i}$ is the band projection of $W$ onto the band
generated by the element $u_i$. It is immediate that $P_{u_{i}}$ is
a band projection of $F$ onto the band generated by the element $\ls
u_{i}\rs$ and $\mathbf{P}_\alpha$ is the band projection of $W$ onto
the band generated by $\{u_i: \, i \in \alpha\}$.

\begin{prop} \label{th:rhscb}
Let $(W,F)$ be an   atomic lattice-normed space, $F$ be
a vector lattice with the principal projection property and
$(u_i)_{i \in I}\subset W$ be a generating collection of  atoms. If
$g \in W$ then $P_{u_i} g = a_i$ for every $i \in I$ and some $a_i
\in \mathbb R$.
\end{prop}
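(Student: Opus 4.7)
The plan is to reduce to the vector lattice $F$: first establish that $\ls u_i \rs$ is an atom of $F$, then invoke the principal projection property to conclude that the band $\{\ls u_i \rs\}^{\bot\bot}$ is one-dimensional, and finally lift back to $W$ via decomposability to conclude that $P_{u_i}g$ lies on the line $\mathbb{R} u_i$ (reading the conclusion as $P_{u_i}g = a_i u_i$).

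\textbf{Step 1 (atomicity transfers to $F$).} Suppose $\ls u_i \rs$ is not an atom of $F$: pick $f_1, f_2 \in F_+$ with $0 < f_1, f_2 \leq \ls u_i \rs$ and $f_1 \wedge f_2 = 0$. Since $f_1 + f_2 = f_1 \vee f_2 \leq \ls u_i \rs$, decomposability of the vector norm (applied twice) yields a splitting $u_i = w_1 + w_2 + w_3$ with $\ls w_j \rs = f_j$ for $j=1,2$ and $\ls w_3 \rs = \ls u_i \rs - f_1 - f_2$. Then $w_1, w_2$ are nonzero, disjoint (since $\ls w_1 \rs \wedge \ls w_2 \rs = f_1 \wedge f_2 = 0$), and satisfy $\ls w_j \rs \leq \ls u_i \rs$, contradicting the atomicity of $u_i$ in $W$.

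\textbf{Step 2 (locate $h := P_{u_i} g$).} By the defining property of the band projection, $h \in \{u_i\}^{\bot\bot}$ in $W$, so Proposition \ref{lat-norm} places $\ls h \rs \in \{\ls u_i \rs\}^{\bot\bot}$ in $F$. By Step 1 together with the principal projection property of $F$, this band is the one-dimensional subspace $\mathbb{R} \ls u_i \rs$. Hence $\ls h \rs = t \ls u_i \rs$ for some $t \geq 0$; the case $t = 0$ gives $h = 0$ and $a_i = 0$.

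\textbf{Step 3 (extract the scalar).} For $t > 0$, set $v_1 = \tfrac{1}{2}(h + t u_i)$ and $v_2 = \tfrac{1}{2}(h - t u_i)$, so $v_1 + v_2 = h$, $v_1 - v_2 = t u_i$, and $\ls v_j \rs \leq t \ls u_i \rs$. If one can show $v_1 \bot v_2$, then atomicity of $u_i$ forces one of the summands to vanish, giving $h = \pm t u_i = a_i u_i$ with $a_i \in \{\pm t\}$. The main obstacle is verifying this disjointness; the plan is to combine the two triangle inequalities $\ls v_1 \rs + \ls v_2 \rs \geq \ls v_1 + v_2 \rs = t\ls u_i \rs$ and $\ls v_1 \rs + \ls v_2 \rs \geq \ls v_1 - v_2 \rs = t\ls u_i \rs$ with the fact that both $\ls v_1 \rs$ and $\ls v_2 \rs$ lie on the ray $\mathbb{R}_+\ls u_i \rs$, pin down the proportionality constants, and then use decomposability to lift the induced disjoint decomposition from $F$ back to a disjoint decomposition of $v_1, v_2$ in $W$.
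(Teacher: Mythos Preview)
Your Steps~1 and~2 are correct and in fact more carefully argued than the paper's own proof, which simply writes ``$g_i = a_i u_i$ for some $a_i \in \mathbb{R}$'' without justification after a decomposition step. But Step~3 contains a genuine gap that cannot be repaired. From $\ls v_1\rs = s_1\ls u_i\rs$, $\ls v_2\rs = s_2\ls u_i\rs$ and your two triangle inequalities you obtain only $s_1 + s_2 \geq t$ together with $s_1, s_2 \leq t$; nothing forces $\min(s_1,s_2)=0$, so $v_1 \bot v_2$ does not follow. The ``lift back via decomposability'' idea does not help either, because the only disjoint decomposition available in the one-dimensional band $\mathbb{R}\ls u_i\rs \subset F$ is the trivial one.

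The obstacle is intrinsic: the assertion $P_{u_i} g \in \mathbb{R}u_i$ is false in general. In the paper's own Example with $\dim X \geq 2$, fix an atom $u_i = (0,\dots,x_i,0,\dots)$ and take $g = (0,\dots,y,0,\dots)$ with $y \notin \mathbb{R}x_i$. Then $g \in \{u_i\}^{\bot\bot}$, so $P_{u_i} g = g$, yet $g$ is not a scalar multiple of $u_i$; the band $\{u_i\}^{\bot\bot}$ in $W$ is a copy of $X$, not a line. Your Step~2 correctly places $\ls h\rs$ on the ray $\mathbb{R}_+\ls u_i\rs$, but the fibre of the vector norm over that ray is all of $X$. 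The paper's proof founders at exactly the same point (the unproved claim $g_i = a_i u_i$), and carries an additional gap you avoided: its opening sentence, that $\ls g\rs$ is dominated by a \emph{finite} sum $\sum_{i=1}^n \lambda_i \ls u_i\rs$, already fails for, e.g., $g$ with $\ls g\rs = (1/n)_{n} \in c_0$.
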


\begin{proof}[Proof of Proposition~\ref{th:rhscb}]
$(1)$ Let $g\in W$. Then there exists a finite collection of atoms
$u_{1},\dots,u_{n}$, such that $\ls
g\rs\leq\sum\limits_{i=1}^{n}\lambda_{i}\ls u_{i}\rs$,
$\lambda_{i}\in\Bbb{R}_{+}$, $i\in\{1,\dots,n\}$. By decomposability
of the vector norm there exist  $g_{1},\dots, g_{n}$ in $W$, such
that
$$
g=\sum\limits_{i=1}^{n}g_{i};\,\ls g_{i}\rs\leq\lambda_{i}\ls
u_{i}\rs;\,i\in\{1,\dots,n\}.
$$
Moreover $g_{i}=a_{i}u_{i}$ for some $a_{i}\in\Bbb{R}$ and every
$i\in\{1,\dots,n\}$,  $\ls g_{i}\rs\bot\ls g_{j}\rs$, $j\neq i$.
Therefore $P_{u_i} g = a_i u_i$.
\end{proof}

\begin{proof}[Proof of Theorem~\ref{th:sdfah}]
Let $T: V \to W$ be a dominated  operator. Fix any $v \in V$ and
$e=\ls v\rs\in E^{+}$. Since the set $\mathfrak{F}_e$ of all
fragments of $e$ is order bounded in $E$, its image $\ls
T\rs(\mathfrak{F}_e)$ is order bounded in $F$, say, $\ls
Tx\rs\leq\ls T\rs\ls x\rs \leq f$ for some $f \in F^+$ and all $x
\sqsubseteq v$. Let $(u_i)_{i \in I}$ be a generating collection of
atoms of $W$, $\Lambda$ the directed set of all finite subsets of
$I$ ordered by inclusion, and $(\mathbf{P}_\alpha)_{\alpha \in
\Lambda}$ the net of band projections of $F$ defined by
\eqref{eq:apsow}. By  Proposition~\ref{th:rhscb},
$\mathbf{P}_\alpha$ is a finite rank operator for every $\alpha \in
\Lambda$. Being a band projection, $\mathbf{P}_\alpha$ is order
continuous. Then for each $\alpha \in \Lambda$ the composition
operator $S_\alpha = \mathbf{P}_\alpha \circ T$ is a finite rank
dominated, order continuous  operator which is strictly narrow by
Theorem~\ref{pr:relatedd8}. So, for each $\alpha \in \Lambda$ we
choose a decomposition $v = v_\alpha' \sqcup v_\alpha''$ with
$S_\alpha (v_\alpha') = S_\alpha (v_\alpha'')$. Then
\begin{align*}
\ls T (v_\alpha') - T (v_\alpha'')\rs &= \ls(I - \mathbf{P}_\alpha) \circ T (v_\alpha') - (I - \mathbf{P}_\alpha) \circ T (v_\alpha'')\rs\\
&\leq \ls(I - \mathbf{P}_\alpha) \circ T (v_\alpha')\rs + \ls(I - \mathbf{P}_\alpha) \circ T (v_\alpha'')\rs\\
&\leq (I - \mathbf{P}_\alpha) \, \ls T (v_\alpha')\rs + (I - \mathbf{P}_\alpha) \, \ls T (v_\alpha'')\rs\\
&\leq 2 \, (I - \mathbf{P}_\alpha) (f) \overset{(o)}\longrightarrow
0.
\end{align*}
\end{proof}


 \section{Acknowledgement}
 Authors are very grateful to the referee for his (her) valuable
 remarks and suggestions and Helen Basaeva for applying her wonderful expertise of \TeX \,
 to the final preparation of the text.



\begin{thebibliography}{18}
 \bibitem
 {AA} Yu.~A.~Abramovich, C.~D.~Aliprantis, \textit{An invitation to operator theory}, Graduate Studies in Math., Amer. Math. Soc., Providence, Rhode Island. 50, 2002.
 \bibitem
 {AB} C.~D.~Aliprantis, O.~Burkinshaw, \textit{Positive Operators}, Springer, Dordrecht, 2006.
 \bibitem
 {Bi-2} D.~Bilik, V.~Kadets, R.~Shvidkov, D.~Werner,\textit{ Narrow operators and the Daugavet property for ultraproducts},  Positivity, (9), 2005. 45--62.
 \bibitem{F}J.~Flores\, C.~Ruiz,\textit{ Domination by Positive Narrow Operators}, Positivity, 7, (17), 2003.  303--321.
 \bibitem
 {Jec} Th.~Jech, \textit{Set Theory}, Springer, Berlin, 2003.
 \bibitem{KP} V.~M.~Kadets, M.~M.~Popov, \textit{On the Lyapunov convexity theorem with applications to sign-embeddings}, Ukra\"{i}n. Mat. Zh., 44, No 9 (1992), 1192--1200.
 \bibitem{Kus} A.~G.~Kusraev, \textit{Dominated Operators}, Kluwer Acad. Publ., Dordrecht--Boston--London, 2000.
 \bibitem
 {Lacey} H.~E.~Lacey, \textit{The Isometrical Theory of Classical Banach Spaces}, Springer--Verlag, Berlin--Heidelberg--New York, 1974.
 \bibitem
 {LTI} J.~Lindenstrauss, L.~Tzafriri, \textit{Classical Banach spaces, Vol. 1, Sequence spaces}, Springer--Verlag, Berlin--Heidelberg--New York, 1977.
 \bibitem
 {LTII} J.~Lindenstrauss, L.~Tzafriri, \textit{Classical Banach spaces, Vol. 2, Function spaces}, Springer--Verlag, Berlin--Heidelberg--New York, 1979.
 \bibitem
 {LZ} W.~A.~J.~Luxemburg, A.~C.~Zaanen, \textit{Riesz Spaces. Vol.~1}, North Holland Publ. Comp., Amsterdam--London, 1971.
 \bibitem{MMP} O.~V.~Maslyuchenko, V.~V.~Mykhaylyuk, M.~M.~Popov, \textit{A lattice approach to narrow operators}, Positivity {13} (2009), 459--495.
 \bibitem{MPPS} V.~Mykhaylyuk, M.~Pliev, M.~Popov, O.~Sobchuk, \textit{Dividing measures and narrow operators}, submitted to Studia Math.
 \bibitem{MPRS} V.~Mykhaylyuk, M.~Popov, B.~Randrianantoanina, G.~Schechtman, \textit{On narrow and $\ell_2$-strictly singular operators on $L_p$}, to appear in Isr. J. Math.
 \bibitem
 {PP90} A.~M.~Plichko, M.~M.~Popov, \textit{Symmetric function spaces on atomless probability spaces}, Dissertationes Math. (Rozprawy Mat.) 306 (1990), pp.~1--85.
 \bibitem
 {P-1} M.~Pliev, \textit{Narrow operators on lattice-normed spaces}, Cent. Eur. J. Math. 9, No 6 (2011), pp.~1276--1287.
 \bibitem
 {PP} M.~Pliev, M.~Popov, \textit{Narrow orthogonally additive operators}, Positivity DOI10.1007/s11117-013-0268.
 \bibitem{PR} M.~Popov, B.~Randrianantoanina, \textit{Narrow Operators on Function Spaces and Vector Lattices}, De Gruyter Studies in Mathematics 45, De Gruyter, 2013.

 \end{thebibliography}
\end{document}